\numberwithin{equation}{section}
\newtheorem{theorem}{Theorem}[section]
\newtheorem{proposition}[theorem]{Proposition}
\newtheorem{lemma}[theorem]{Lemma}
\theoremstyle{definition}
\newcommand{\C}{\mathbb{C}}
\newcommand{\GL}{\mathrm{GL}}
\begin{document}

\baselineskip=15pt

\title[]{Induced Representations on Character Varieties of Surfaces}

\author[I. Biswas]{Indranil Biswas}

\address{Department of Mathematics, Shiv Nadar University, NH91, Tehsil
Dadri, Greater Noida, Uttar Pradesh 201314, India}

\email{indranil.biswas@snu.edu.in, indranil29@gmail.com}

\author[J. Hurtubise]{Jacques Hurtubise}

\address{Department of Mathematics, McGill University, Burnside
Hall, 805 Sherbrooke St. W., Montreal, Que. H3A 2K6, Canada}

\email{jacques.hurtubise@mcgill.ca}

\author[L. C. Jeffrey]{Lisa C. Jeffrey}

\address{Department of Mathematics,
University of Toronto, Toronto, Ontario, Canada}

\email{jeffrey@math.toronto.edu}

\author[S. Lawton]{Sean Lawton}

\address{Department of Mathematical Sciences, George Mason University, 4400
University Drive, Fairfax, Virginia 22030, USA}

\email{slawton3@gmu.edu}

\subjclass[2010]{Primary 14M35; Secondary 53D30}

\keywords{Character variety, Poisson structure}

\date{\today}

\begin{abstract}
We show that a Frobenius reciprocity map on character varieties of surfaces is a Poisson embedding.
\end{abstract}

\maketitle

\section{Introduction}

Let $X,\, Y$ be closed oriented surfaces and $f \,:\, Y\,\longrightarrow\, X$ a degree $d$ branched
covering. Given a flat vector
bundle $(V,\, \nabla)$ of rank $r$ on $Y$, by taking the direct image (pushforward) we get a flat bundle 
$(f_*V,\, f_*\nabla)$ of rank $dr$ on $X\setminus B$ by Grauert's Theorem, where $B$ is the branch locus of $f$.

Denote $G_m\,=\,\GL(m,\C)$. The isomorphism class of the bundle $(V, \,\nabla)$ is determined by the
$G_r$--conjugation orbit of its holonomy $\rho\,:\,\pi_1(Y)\,\longrightarrow\, G_r$. Denote this orbit
(conjugacy class) by $[\rho]$. Consequently, the moduli space of such bundles is identified with the (affine)
Geometric Invariant Theory (GIT) quotient $\mathcal{M}_r(Y)\,:=\,\mathrm{Hom}(\pi_1(Y),\,G_r)/\!\!/G_r$.
We note that the {\it affine} GIT quotient is the affine variety whose coordinate ring is the ring of
$\mathrm{Inn}(G_r)$-invariant polynomials on $\mathrm{Hom}(\pi_1(Y),\,G_r)$. The above GIT quotient
$\mathrm{Hom}(\pi_1(Y),\,G_r)/\!\!/G_r$ is also known as a {\it character variety} or more precisely
the $G_r$--character variety of $\pi_1(Y)$. We likewise consider the character variety
$\mathcal{M}_{dr}(X-B)\,:=\,\mathrm{Hom}(\pi_1(X-B),\,G_{dr})/\!\!/G_{dr}$.

By \cite{Go, La, BJ}, we know $\mathcal{M}_r(Y)$ is (holomorphic) symplectic and $\mathcal{M}_{dr}(X-B)$ is 
(holomorphic) Poisson on their smooth loci. By sending the holonomy of $(V,\, \nabla)$ to the holonomy of 
$(f_*V,\, f_*\nabla)$, we get a natural map $\mathsf{Frob} \,:\, \mathcal{M}_r(Y)
\,\longrightarrow\, \mathcal{M}_{dr}(X-B)$ which we call the {\it Frobenius reciprocity map}.

\begin{theorem}\label{thm:main}
 $\mathsf{Frob}$ is a Poisson embedding.
\end{theorem}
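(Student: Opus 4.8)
The plan is to describe $\mathsf{Frob}$ explicitly through induced representations and then to establish, in turn, that it is a morphism of affine varieties, that it is a closed immersion, and that it intertwines the Poisson brackets. Write $\widetilde{B} = f^{-1}(B)$ and let $H = \pi_1(Y\setminus\widetilde{B})$, an index-$d$ subgroup of $\Gamma = \pi_1(X\setminus B)$ via the \'etale covering $Y\setminus\widetilde{B}\to X\setminus B$, and let $q\colon H\twoheadrightarrow\pi_1(Y)$ be the surjection obtained by filling back in the punctures. If $\rho\colon\pi_1(Y)\to G_r$ is the holonomy of $(V,\nabla)$, then the holonomy of $(f_*V,f_*\nabla)$ is the induced representation $\mathrm{Ind}_H^\Gamma(\rho\circ q)$. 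Since inducing a tuple of matrices is a polynomial operation that commutes with conjugation, $\mathsf{Frob}$ descends to a morphism $\mathcal{M}_r(Y)\to\mathcal{M}_{dr}(X\setminus B)$ of affine GIT quotients. One also notes that a loop around a point of $B$ has null-homotopic lifts in $Y$, so the holonomies of $\mathrm{Ind}_H^\Gamma(\rho\circ q)$ around $B$ lie in fixed (semisimple, permutation-type) conjugacy classes independent of $\rho$; hence the image of $\mathsf{Frob}$ lies in a single symplectic leaf $\mathcal{S}$ of $\mathcal{M}_{dr}(X\setminus B)$, and the Poisson content of the theorem is that $\mathsf{Frob}$ restricts to a symplectic embedding into $\mathcal{S}$.

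Next I would show $\mathsf{Frob}$ is a closed immersion. For injectivity I would use the projection formula: the counit $f^*f_*V\to V$ is split (the diagonal section of $(Y\setminus\widetilde{B})\times_{X\setminus B}(Y\setminus\widetilde{B})$ over $Y\setminus\widetilde{B}$ splits it), so $V|_{Y\setminus\widetilde{B}}$ — hence $V$, by uniqueness of the flat extension across $\widetilde{B}$ — is recovered functorially, and compatibly with connections, from $f_*V$; here one must verify this recovery is globally single-valued, i.e.\ that $\mathsf{Frob}$ is genuinely injective and not merely generically finite. For the immersion I would compute $d\mathsf{Frob}$ on Zariski tangent spaces $H^1(\pi_1(Y),\mathrm{Ad}\,\rho)$ and $H^1(\Gamma,\mathrm{Ad}\,\mathrm{Ind}_H^\Gamma(\rho\circ q))$: by the Mackey decomposition $\mathrm{End}(\mathrm{Ind}_H^\Gamma W)\cong\mathrm{Ind}_H^\Gamma\bigl(\bigoplus_{g}{}^{g}W\otimes W^{*}\bigr)$ together with Shapiro's lemma, the second group contains the distinguished summand $H^1(H,\mathrm{End}\,W)$, into which $H^1(\pi_1(Y),\mathrm{Ad}\,\rho)$ injects by inflation; $d\mathsf{Frob}$ is this composite, hence injective. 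Finally, for the image to be closed it suffices that $\mathsf{Frob}^{*}$ be surjective on coordinate rings; since these are generated by trace functions of loops and $\mathsf{Frob}^{*}$ carries the trace function of a loop $\gamma$ (resp.\ of $\gamma^{k}$) to a sum of trace functions of the homeomorphic lifts of $\gamma$ (resp.\ of lifts of $\gamma^{k}$), Newton's identities recover the individual trace functions upstairs.

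The heart of the proof is the Poisson compatibility, and here I would use the Goldman description. On either character variety the coordinate ring is generated by trace functions $f_\alpha$ of free homotopy classes of loops, with $\{f_\alpha,f_\beta\}=f_{[\alpha,\beta]}$ where $[\alpha,\beta]=\sum_{p}\varepsilon_p(\alpha,\beta)\,\alpha_p\beta_p$ is the Goldman bracket, summed over transverse intersection points $p$ of $\alpha$ and $\beta$, with $\alpha_p\beta_p$ the loop reconnected at $p$. Combining this with the trace-of-induced identity $\mathsf{Frob}^{*}f_\gamma=\sum_i f_{\widetilde{\gamma}_i}$ (sum over the components of $f^{-1}(\gamma)$ mapping homeomorphically onto $\gamma$, representatives chosen to avoid $B$), the required identity $\{\mathsf{Frob}^{*}f_\alpha,\mathsf{Frob}^{*}f_\beta\}_Y=\mathsf{Frob}^{*}\{f_\alpha,f_\beta\}_{X\setminus B}$ reduces to a sign-preserving bijection between triples $(\widetilde{p},\widetilde{\alpha}_i,\widetilde{\beta}_j)$ with $\widetilde{p}\in\widetilde{\alpha}_i\cap\widetilde{\beta}_j$ and pairs $(p,\kappa)$ with $p\in\alpha\cap\beta$ and $\kappa$ a homeomorphic lift of $\alpha_p\beta_p$. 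This bijection is forced by the fact that $f$ is an orientation-preserving local homeomorphism over $X\setminus B$, together with the observation that the reconnected loop $(\widetilde{\alpha}_i)_{\widetilde{p}}(\widetilde{\beta}_j)_{\widetilde{p}}$ is exactly the homeomorphic lift of $\alpha_p\beta_p$ through $\widetilde{p}$; moreover the trace functions of loops around $B$ (the Casimirs of $\mathcal{M}_{dr}(X\setminus B)$) pull back to constants, consistently with the image sitting inside the leaf $\mathcal{S}$. The main obstacle I anticipate is precisely this last bookkeeping — tracking intersection signs, deciding which lifts are homeomorphic, and matching the reconnected loops uniformly across a branched (rather than merely \'etale) cover — together with making the injectivity argument airtight in whatever sense of embedding is intended.
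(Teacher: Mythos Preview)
Your route and the paper's diverge substantially. The paper handles the factorization $\mathsf{Frob}=\mathsf{Ind}\circ\mathsf{Res}$ piecewise: $\mathsf{Res}$ is Poisson by a citation to \cite{BHJL}, injectivity of $\mathsf{Ind}$ is referred to Mackey, and the Poisson property of $\mathsf{Ind}$ is argued sheaf-cohomologically---the Poisson tensors on source and target are identified with the canonical maps $H^1_c\to H^1$ in the local systems $\mathrm{End}(\underline V)$ and $\mathrm{End}(\phi_*\underline V)$, while $d\Psi$ and $(d\Psi)^*$ are the inclusion and projection for the direct summand $\phi_*\mathrm{End}(\underline V)\hookrightarrow\mathrm{End}(\phi_*\underline V)$. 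Your description of $d\mathsf{Frob}$ via Shapiro and the Mackey decomposition is essentially the group-cohomology translation of this last point, but your Poisson step---a direct verification of $\mathsf{Frob}^*\{f_\alpha,f_\beta\}=\{\mathsf{Frob}^*f_\alpha,\mathsf{Frob}^*f_\beta\}$ using Goldman's loop formula and a combinatorial matching of lifts---is a genuinely different mechanism, one the paper avoids by never descending to individual loops.

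That matching has a real gap, precisely where you anticipated trouble. Let $\sigma_\alpha,\sigma_\beta$ be the permutations of the fiber over an intersection point $p$ induced by lifting $\alpha,\beta$. Since $\mathsf{Frob}^*f_\gamma$ sums only over degree-one lifts, i.e.\ over fixed points of $\sigma_\gamma$, your left-hand side accesses only \emph{common} fixed points of $\sigma_\alpha$ and $\sigma_\beta$, whereas the right-hand side $\mathsf{Frob}^*f_{\alpha_p\beta_p}$ accesses fixed points of the \emph{product} $\sigma_\alpha\sigma_\beta$. These are not in bijection: a homeomorphic lift of $\alpha_p\beta_p$ can be the concatenation of two arcs lying on multi-sheeted lifts of $\alpha$ and of $\beta$. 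Concretely, in a double cover with $\sigma_\alpha=\sigma_\beta=(1\;2)$ and a single transverse intersection at $p$, one has $\mathsf{Frob}^*f_\alpha=\mathsf{Frob}^*f_\beta=0$, so the left side vanishes identically, while $\sigma_\alpha\sigma_\beta=\mathrm{id}$ and $\mathsf{Frob}^*f_{\alpha_p\beta_p}$ is a nontrivial sum of trace functions on $\mathcal{M}_r(Y)$. The ``sign-preserving bijection'' you propose therefore is not a bijection, and the Goldman-bracket check does not close as written. (A cognate issue affects your Newton-identity step for closedness: the pullback of $f_{\gamma^k}$ picks up contributions from every $m$-sheeted lift of $\gamma$ with $m\mid k$, so it is not the $k$-th power sum in the $f_{\widetilde\gamma_i}$.)
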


Let $Y_0\,=\,Y-f^{-1}(B)$ and $\mathcal{M}_r(Y_0)\,:=\,\mathrm{Hom}(\pi_1(Y_0),\,G_r)/\!\!/G_r$. Again,
$\mathcal{M}_r(Y_0)$ is (holomorphic) Poisson on its smooth locus. Since $Y_0\,\subset\, Y$ there is a
natural map $\iota\,:\,\pi_1(Y_0)\,\longrightarrow\, \pi_1(Y)$. The map $\iota$ is surjective (and hence has a
right inverse) because any loop in $Y$ can be homotoped to avoid $f^{-1}(B)$ since $\dim f^{-1}(B)\,=\,0$.

Let $\mathsf{Res}\,:\,\mathcal{M}_r(Y)\,\longrightarrow\, \mathcal{M}_r(Y_0)$ be given by $\mathsf{Res}([h])
\,=\,[h\circ \iota]$, where $h\,:\,\pi_1(Y)\,\longrightarrow\, G_r$. If
$\mathsf{Res}([h_1])\,=\,\mathsf{Res}([h_1])$, then 
$[h_1\circ\iota]\,=\,[h_2\circ\iota]$ and so $[h_1]=[h_2]$ since $\iota$ has a right inverse. Therefore, 
$\mathsf{Res}$ is an embedding. The inclusion map $Y_0\,\subset\, Y$ preserves transversality of based loops 
and thus by \cite[Theorem A]{BHJL} the ``restriction" map $\mathsf{Res}$ is Poisson.

Since $f\,:\,Y_0\,\longrightarrow\, X-B$ is an unramified covering map, $\pi_1(Y_0)$ is isomorphic to a
subgroup of $\pi_1(X-B)$. Therefore, there
is a map $\mathsf{Ind}\,:\,\mathcal{M}_r(Y_0)\,\longrightarrow\, \mathcal{M}_{dr}(X-B)$ given by 
$\mathsf{Ind}([h])\,=\,[\mathsf{ind}(h)]$, where $\mathsf{ind}(h)\,:\,\pi_1(X-B)\,\longrightarrow\, G_{dr}$
is the representation induced by $h\,:\,\pi_1(Y_0)\,\longrightarrow\, G_r$. Since $f\,:\,Y_0\,\longrightarrow\,
X-B$ is a $d$--sheeted cover, the index $[\pi_1(X-B):\,\pi_1(Y_0)]\,=\,d$ is finite and 
$\pi_1(X-B)/\pi_1(Y_0)\,=\, \{\gamma_1\pi_1(Y_0),\, \cdots,\,\gamma_d\pi_1(Y_0)\}$. And
so $\mathsf{ind}(h)$ is defined 
by the natural action of $\pi_1(X-B)$ on $\oplus_{i=1}^d\gamma_i\C^r\,\cong\,\C^{dr}$ by $\gamma\cdot 
\sum_{i=1}^d\gamma_iv_i\,:=\,\sum_{i=1}^d\gamma_{j_i}\pi(\delta_i)v_i$, where 
$\gamma\gamma_i\,=\,\gamma_{j_i}\delta_i$ with $\delta_i\,\in\,\pi_1(Y_0)$. Thus, $\mathsf{ind}(h)$ takes
values in $\mathsf{Sym}_d\times G_r$ inside $G_{dr}$ where $\mathsf{Sym}_d$ is the symmetric group on $d$ symbols 
(realized by permutation matrices).

By construction of the direct image sheaf, we have $\mathsf{Frob}\,=\,\mathsf{Ind}\circ \mathsf{Res}$ (see 
Section \ref{sec:directimage} for details). Consequently, $\mathsf{Frob}$ is algebraic. By the Mackey 
Formula \cite{Mackey} applied to this context, we see that $\mathsf{Ind}$ is injective. Therefore, 
$\mathsf{Frob}$ is an embedding. In Section \ref{sec3}, we show that $\mathsf{Ind}$ is Poisson. Having 
both $\mathsf{Res}$ and $\mathsf{Ind}$ injective and Poisson, we have outlined the proof of Theorem 
\ref{thm:main}.

\section{Direct image of a connection}\label{sec:directimage}

Let $X$ and $Y$ be compact connected $C^\infty$ oriented surfaces, and let
\begin{equation}\label{e1}
f\ :\ Y\ \longrightarrow\ X
\end{equation}
be an orientation preserving ramified covering map. This means the following:
\begin{enumerate}
\item $f$ is a $C^\infty$ surjective map,

\item there is finite subset $B_0\, \subset\, X$ such that the restriction
$$
f\big\vert_{f^{-1}(X\setminus B_0)}\ :\ f^{-1}(X\setminus B_0)\ \longrightarrow\ X\setminus B_0
$$
is an unramified covering map, and

\item for each point $y\, \in\, f^{-1}(B_0)$, there is a positive integer $n_y$ such that
the restriction of $f$ to a small open neighborhood of $y$ is conjugate to the map
$z\, \longmapsto\, z^{n_y}$ defined around $0\,\in\, {\mathbb C}$. 
\end{enumerate}
We note that $B_0$ is allowed to be the empty set.

Let $B\, \subset \, X$ be a subset containing $B_0$. Let
\begin{equation}\label{e2}
R \ :=\ f^{-1}(B) \ \subset\ Y
\end{equation}
be the inverse image of $B$. The open subsets $X\setminus B\, \subset\, X$ and
$Y\setminus R\, \subset\, Y$ will be denoted by $X_0$ and $Y_0$ respectively. So
the restriction of $f$
\begin{equation}\label{e3}
\phi\ :=\ f\big\vert_{Y_0} \ :\ Y_0\ \longrightarrow\ X_0
\end{equation}
is an unramified covering map (recall that $B_0\, \subset\, B$). Fix a base point $y_0\, \in\, Y_0$. Denote
the fundamental
groups $\pi_1(X_0,\, \phi(y_0))$ and $\pi_1(Y_0,\, y_0)$ by $\Gamma$ and $\Delta$ respectively.
Using the homomorphism
$$
\phi_* \ :\ \Delta\ =\ \pi_1(Y_0,\, y_0) \ \longrightarrow \ \pi_1(X_0,\, \phi(y_0))\ =\ \Gamma
$$
induced by $\phi$ in \eqref{e3}, we will consider $\Delta$ as a finite index subgroup of $\Gamma$.

Denote the degree of the map $f$ in \eqref{e1} by $d$. Let $V$ be a $C^\infty$ complex vector bundle of
rank $r$ on $Y_0$ equipped with a flat connection
\begin{equation}\label{e4}
\nabla\ :\ V \ \longrightarrow\ V\otimes T^*Y_0 .
\end{equation}
Consider the $C^\infty$ complex vector bundle on $X_0$ of rank
$dr$ given by the direct image $\phi_* V$. Since $\phi$ is an unramified covering map, taking the direct
image --- using $\phi$ --- of the connection $\nabla$ in \eqref{e4} we have
\begin{equation}\label{e31}
\phi_*\nabla \ :\ \phi_* V \ \longrightarrow\ \phi_*(V\otimes T^*Y_0)\ =\ (\phi_* V)\otimes T^*X_0.
\end{equation}
{}From the Leibniz identity satisfied by $\nabla$ it follows that $\phi_*\nabla$ also satisfies
the Leibniz identity. Hence $\phi_*\nabla$ is a connection on $\phi_* V$. It is straightforward to
check that the connection $\phi_*\nabla$ on $\phi_* V$ is flat (in fact, in general
the curvature of $\phi_*\nabla$ is given by the direct image of the curvature of $\nabla$).

\begin{proposition}\label{prop-induced}
The monodromy representation of $\Gamma$ --- for the flat connection $\phi_*\nabla$ --- coincides with the
induced representation associated to the monodromy representation of the subgroup $\Delta\, \subset\,
\Gamma$ corresponding to the flat connection $\nabla$. 
\end{proposition}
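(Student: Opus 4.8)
The plan is to put both representations on the \emph{same} $dr$-dimensional vector space and then compare the two $\Gamma$-actions directly, by unwinding parallel transport. Fix $x_0\,=\,\phi(y_0)$ and write $\phi^{-1}(x_0)\,=\,\{y_1,\,\ldots,\,y_d\}$. By the definition of the direct image of a vector bundle under the covering map $\phi$, there is a canonical identification $(\phi_*V)_{x_0}\,=\,\bigoplus_{i=1}^{d}V_{y_i}$, and over a connected, evenly covered open set $U\,\ni\, x_0$ the sections of $\phi_*V$ over $U$ are identified with the sections of $V$ over the disjoint sheets of $\phi^{-1}(U)$. The first step is to record the consequence of this for the connection: since $\phi_*\nabla$ is characterized by having $\phi_*(\ker\nabla)$ as its sheaf of flat sections, the $\phi_*\nabla$-parallel transport along a path $c$ in $X_0$ from $x_0$ to $x_1$ is the direct sum, over the $d$ sheets, of the $\nabla$-parallel transports along the lifts of $c$; precisely, writing $\widetilde{c}_i$ for the lift of $c$ that starts at $y_i$ (and ends at some point of $\phi^{-1}(x_1)$), the block of the transport out of $V_{y_i}$ is exactly $\nabla$-parallel transport along $\widetilde{c}_i$. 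I would establish this local statement first, as it is the only genuinely non-formal ingredient.

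Next I would choose a trivialization of $(\phi_*V)_{x_0}$ adapted to the standard model $\bigoplus_{i=1}^{d}\gamma_i\C^r$ of the induced representation. Pick coset representatives $\gamma_1,\,\ldots,\,\gamma_d\,\in\,\Gamma$ for $\Gamma/\Delta$ with $\gamma_1\,=\,e$. Under the bijection between $\Gamma/\Delta$ and the fiber $\phi^{-1}(x_0)$ supplied by the monodromy of the covering $\phi$, the coset of $\gamma_i$ corresponds to the endpoint of a lift, starting at $y_0$, of a loop representing $\gamma_i$ (here one has to fix, once and for all, whether it is $\gamma_i$ or $\gamma_i^{-1}$, and correspondingly whether the fiber is modelled on left or right cosets); relabel the $y_i$ so that $y_i$ is this endpoint, and let $\alpha_i$ be the corresponding lifting path from $y_0$ to $y_i$ in $Y_0$, so that $\alpha_1$ is constant and $y_1\,=\,y_0$. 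Fixing a frame of $V_{y_0}$ and carrying it along $\alpha_i$ by $\nabla$-parallel transport identifies each $V_{y_i}$ with $\C^r$; thus $(\phi_*V)_{x_0}\,\cong\,\bigoplus_{i=1}^{d}\C^r$, with the $i$-th summand playing the role of $\gamma_i\C^r$.

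The third step is to compute, in this trivialization, the monodromy of $\phi_*\nabla$ along an arbitrary $\gamma\,\in\,\Gamma$ and to match it with the operator $\sum_i\gamma_iv_i\,\longmapsto\,\sum_i\gamma_{j_i}\rho(\delta_i)v_i$ of the Introduction, where $\gamma\gamma_i\,=\,\gamma_{j_i}\delta_i$ with $\delta_i\,\in\,\Delta$ and $\rho\,:\,\Delta\,\to\, G_r$ is the monodromy of $\nabla$. Combining Step~1 with the description of the covering monodromy, the lift of $\gamma$ relevant to the $i$-th sheet joins $y_i$ to $y_{j_i}$; expressing the transport block $V_{y_i}\,\to\, V_{y_{j_i}}$ in the chosen frames $V_{y_i}\,\cong\,\C^r\,\cong\, V_{y_{j_i}}$ turns it into $\nabla$-parallel transport around the loop at $y_0$ obtained by concatenating $\alpha_i$, this lift, and $\alpha_{j_i}^{-1}$. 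That loop projects under $\phi$ to a loop representing $\delta_i$, so the block is $\rho(\delta_i)$; summing over the sheets produces exactly the stated operator, so the monodromy representation of $\phi_*\nabla$ coincides with $\mathsf{ind}(\rho)$. As both the monodromy of $\phi_*\nabla$ and the induced representation are only defined up to conjugation — by the frame at $y_0$ and by the coset representatives, respectively — this yields the proposition.

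The step I expect to be the main obstacle is none of these conceptually, but rather the bookkeeping in Steps~2 and 3 needed to make the path concatenations reproduce the precise identity $\gamma\gamma_i\,=\,\gamma_{j_i}\delta_i$ as written: the choice of whether to lift $\gamma_i$ or $\gamma_i^{-1}$, of left versus right cosets, and of the direction of parallel transport all interact, and an inconsistent combination produces the contragredient of the formula, or a conjugate of it, rather than the formula itself; one simply has to commit to the set of conventions under which the Introduction's formula holds and carry the argument through with it. The only ingredient that is not pure diagram-chasing is the local decomposition of $\phi_*\nabla$-parallel transport in Step~1, which follows from the characterization of the direct image connection by its sheaf of flat sections.
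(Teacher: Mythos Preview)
Your argument is correct and complete up to the convention-tracking you flag at the end, but it proceeds along a different line from the paper's proof. You work with the coset model $\bigoplus_{i=1}^{d}\gamma_i\C^r$ of the induced representation and compute the monodromy of $\phi_*\nabla$ block by block: the key input is your Step~1 decomposition of $\phi_*\nabla$-parallel transport into sheetwise $\nabla$-transports, after which choosing lift paths $\alpha_i$ and concatenating gives the matrix of $\mathsf{ind}(\rho)(\gamma)$ directly. The paper instead uses the function-space model $W=\{\beta:\Gamma\to V_{y_0}\mid\beta(\gamma\delta)=\rho(\delta^{-1})\beta(\gamma)\}$, forms the associated flat bundle $(\mathcal{W},\nabla^W)$ on $X_0$ via the universal cover $\widetilde{X}_0$, and then builds a global connection-preserving isomorphism $\mathcal{I}:\mathcal{W}\to\phi_*V$: the fiber identification at $\phi(y_0)$ comes from recognizing both $W$ and $\bigoplus_{z}V_z$ as the same quotient $\mathcal{F}/\Delta$ (with $\mathcal{F}$ the maps $\varpi^{-1}(\phi(y_0))\to V$), and the extension to other fibers is by parallel transport on both sides. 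Your route is more elementary and lands exactly on the Introduction's formula $\gamma\gamma_i=\gamma_{j_i}\delta_i$, at the cost of the convention bookkeeping you note; the paper's route is choice-free and yields a canonical bundle isomorphism rather than a matrix identity, which is what is actually used later in Section~3 when comparing local monodromies of $\nabla^{W_i}$ and $\phi_*\nabla_i$.
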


\begin{proof}
To explain this, let
$$
\rho\ :\ \Delta \ \longrightarrow\ \text{GL}(V_{y_0})
$$
be the monodromy representation for the flat connection $\nabla$. Using $\rho$, consider $V_{y_0}$
as a left $\Delta$--module. Let $W$ denote the space of all maps $\beta\, :\, \Gamma\,
\longrightarrow\, V_{y_0}$ satisfying the following condition:
\begin{equation}\label{e32}
\beta(\gamma\delta)\ =\ \rho(\delta^{-1})(\beta(\gamma))
\end{equation}
for all $\gamma\, \in\, \Gamma$ and $\delta\, \in\, \Delta$. Note that $W$ is a complex vector
space of dimension $\text{degree}(f)\cdot \dim V_{y_0}\,=\, dr$. The left-translation action of $\Gamma$ on
itself produces an action of $\Gamma$ on $W$. More explicitly, the action of any element
$\gamma\, \in\, \Gamma$ on $W$ sends any $\beta\, :\, \Gamma\, \longrightarrow\, V_{y_0}$ to the
map $\gamma' \, \longmapsto\, \beta(\gamma\gamma')$, $\gamma'\, \in\, \Gamma$. The vector bundle
with flat connection on $X_0$ given by the $\Gamma$--module $W$ is canonically identified with
$\phi_* V$ equipped with the flat connection $\phi_*\nabla$. We will briefly recall the construction of this canonical isomorphism.

Let
\begin{equation}\label{a1}
({\mathcal W},\, \nabla^W)
\end{equation}
be the vector bundle on $X_0$ with flat connection given
by the $\Gamma$--module $W$. We recall that
$${\mathcal W}\ =\ (\widetilde{X}_0\times W)/\Gamma ,$$
where
\begin{equation}\label{e6}
\varpi\ :\ \widetilde{X}_0\ \longrightarrow\ X_0
\end{equation}
is the universal cover of $X_0$ for the base point $\phi(y_0)$, and
$\pi_1(X_0,\, \phi(y_0))$ acts twisted diagonally on $\widetilde{X}_0\times W$ using the deck transformations
of the covering $\widetilde{X}_0$ and the $\Gamma$--module structure of $W$. More precisely,
two points $(x_1,\, w_1)$ and $(x_2,\, w_2)$ of $\widetilde{X}_0\times W$ lie over the same point
of $\mathcal W$ if and only if there is $\gamma\, \in\, \Gamma$ such that $x_2\,=\, x_1\gamma$ and
$w_2\,=\, \gamma^{-1}\cdot w_1$. In particular, there is a canonical isomorphism of the fiber
${\mathcal W}_{\phi(y_0)}$ with $W$ that sends any $w\, \in\, W$ to the equivalence class
of $(\widetilde{\phi(y_0)},\, w)$, where $\widetilde{\phi(y_0)}\, \in\, \widetilde{X}_0$ is the
base point of $\widetilde{X}_0$ over $\phi(y_0)$. The trivial connection
on the trivial vector bundle $\widetilde{X}_0\times W\, \longrightarrow\, \widetilde{X}_0$ is preserved
by the action of $\Gamma$ and hence it descends to a flat connection on the vector bundle
$${\mathcal W}\ :=\ (\widetilde{X}_0\times W)/\Gamma\ \longrightarrow\ \widetilde{X}_0/\Gamma\ =\ X_0;$$
this descended flat connection on ${\mathcal W}$ is denoted by $\nabla^W$.

The fiber $(\phi_* V)_{\phi(y_0)}$ of $\phi_*V$ over $\phi(y_0)$ is identified with
$\bigoplus_{z\in \phi^{-1}(\phi(y_0))} V_z$. We will now show that there is a canonical isomorphism
\begin{equation}\label{e5}
I \ :\ {\mathcal W}_{\phi(y_0)}\ =\ W \ \longrightarrow\ (\phi_* V)_{\phi(y_0)}\
=\ \bigoplus_{z\in \phi^{-1}(\phi(y_0))} V_z
\end{equation}
of fibers over the base point $\phi(y_0)\, \in\, X_0$ (it was noted above that
${\mathcal W}_{\phi(y_0)}\, =\, W$).

Let ${\mathcal F}$ denote the space of all maps from
$\varpi^{-1}(\phi(y_0))$ (see \eqref{e6} for $\varpi$) to $V$. The action of $\Gamma$ on
$\varpi^{-1}(\phi(y_0))$ produces an action of $\Gamma$ on ${\mathcal F}$.
The universal cover of $Y_0$ for the point $y_0$ coincides with the universal cover $\widetilde{X}_0$
of $X_0$ for the point $\phi(y_0)$; in fact, we have $Y_0\,=\, \widetilde{X}_0/\Delta$. Using this it follows
that $W\,=\, {\mathcal F}/\Delta$. In addition, it can be shown that $\bigoplus_{z\in \phi^{-1}(\phi(y_0))} V_z\,=\,
{\mathcal F}/\Delta$. Indeed, this follows from the fact the pullback of the flat
vector bundle $(V,\, \nabla)$ to the universal
cover $\widetilde{X}_0$ of $Y_0$ is canonically identified with the trivial vector bundle
$\widetilde{X}_0\times V\, \longrightarrow\, \widetilde{X}_0$ equipped with the trivial connection.
Combining the above two isomorphisms $W\,=\, {\mathcal F}/\Delta$ and $\bigoplus_{z\in \phi^{-1}(\phi(y_0))} V_z
\,=\, {\mathcal F}/\Delta$ we get an isomorphism $W\, \stackrel{\sim}{\longrightarrow}\,
\bigoplus_{z\in \phi^{-1}(\phi(y_0))} V_z$; this isomorphism is denoted by $I$ in \eqref{e5}.

Using $I$ in \eqref{e5}, we will construct an isomorphism of vector bundles
\begin{equation}\label{e7}
{\mathcal I}\ :\ {\mathcal W}\ \longrightarrow\ \phi_* V.
\end{equation}

Take any point $x\, \in\, X_0$. Fix a homotopy class of path $\gamma$ connecting $x$ to the base point
$\phi(y_0)$. The parallel translation, along $\gamma$, for the flat connection $\nabla^W$ on $\mathcal W$
produces an isomorphism
\begin{equation}\label{e8}
\Phi_x\ :\ {\mathcal W}_x\ \longrightarrow\ {\mathcal W}_{\phi(y_0)}.
\end{equation}
For each point $z\, \in\, \phi^{-1}(\phi(y_0))$, let $\gamma_z$ be the unique lift of $\gamma$ passing
through $z$. Denote by $\alpha(z)$ the point of $\gamma_z$ over $x$; so $\gamma_z$ connects $\alpha(z)$
with $z$. Consider parallel translation, along $\gamma_z$, for the connection $\nabla$ on $V$
in \eqref{e4}. This produces an isomorphism
$$
\widetilde{\Phi}_z\ :\ V_{\alpha(z)}\ \longrightarrow\ V_z.
$$
Combining these isomorphisms $\widetilde{\Phi}_z$ we have an isomorphism
\begin{equation}\label{e9}
\bigoplus_{z \in\ \phi^{-1}(\phi(y_0))} \widetilde{\Phi}^{-1}_z\ :\ 
\bigoplus_{z \in\ \phi^{-1}(\phi(y_0))} V_z\ \longrightarrow\ 
\bigoplus_{z \in\ \phi^{-1}(\phi(y_0))}
V_{\alpha(z)}.
\end{equation}
Now consider the homomorphism
\begin{equation}\label{e10}
{\mathcal I}'_x\ :=\ \left(\bigoplus_{z \in\ \phi^{-1}(\phi(y_0))} \widetilde{\Phi}^{-1}_z\right)
\circ I\circ \Phi_x
\ :\ {\mathcal W}_x\ \longrightarrow\ \bigoplus_{z \in\ \phi^{-1}(\phi(y_0))}
V_{\alpha(z)},
\end{equation}
where $\Phi_x$, $I$ and $\bigoplus_{z \in\ \phi^{-1}(\phi(y_0))} \widetilde{\Phi}^{-1}_z$ are constructed
in \eqref{e8}, \eqref{e5} and \eqref{e9} respectively. Note that ${\mathcal I}'_x$ is an isomorphism because
$\Phi_x$, $I$ and $\bigoplus_{z \in\ \phi^{-1}(\phi(y_0))} \widetilde{\Phi}^{-1}_z$ are all isomorphisms
(as noted above). Since the above map $z\, \longmapsto\, \alpha(z)$, $z\, \in\,
\phi^{-1}(\phi(y_0))$, is a bijection between $\phi^{-1}(\phi(y_0))$ and $\phi^{-1}(x)$, we have
$$
\bigoplus_{z \in\ \phi^{-1}(\phi(y_0))}V_{\alpha(z)}\ =\ \bigoplus_{z'\in\ \phi^{-1}(x)}V_{z'}
\ =\ (\phi_* V)_x.
$$
Consequently, the isomorphism ${\mathcal I}'_x$ in \eqref{e10} produces an isomorphism
\begin{equation}\label{e11}
{\mathcal I}_x\ :=\ \left(\bigoplus_{z \in\ \phi^{-1}(\phi(y_0))}\widetilde{\Phi}^{-1}_z\right) \circ I\circ \Phi_x
\ :\ {\mathcal W}_x\ \longrightarrow\ (\phi_* V)_x.
\end{equation}
It is straightforward to check that the isomorphism ${\mathcal I}_x$ in \eqref{e11} does not depend on
the homotopy class of $\gamma$. Consequently, we get an isomorphism $\mathcal I$ as in \eqref{e7} whose
restriction over $x$ coincides with ${\mathcal I}_x$ for every $x\, \in\, X_0$.

From the construction of the isomorphism ${\mathcal I}$ in \eqref{e7} it follows immediately that
${\mathcal I}$ takes the flat connection $\nabla^W$ on ${\mathcal W}$ to the flat connection 
$\phi_*\nabla$ on $\phi_* V$.
\end{proof}

\section{Map between character varieties}\label{sec3}

Let $(V_1,\, \nabla_1)$ and $(V_2,\, \nabla_2)$ be two flat vector bundles over $Y_0$ of rank $r$
such that for every $z\, \in\, R\,:=\, Y\setminus Y_0$ (see \eqref{e2} for $R$), the conjugacy
class of the local monodromy of $\nabla_1$ around $z$ coincides with the conjugacy
class of the local monodromy of $\nabla_2$ around $z$. Let $({\mathcal W}_1,\, \nabla^{W_1})$
(respectively, $({\mathcal W}_2,\, \nabla^{W_2})$) be the flat vector bundle on $X_0$ constructed from
$(V_1,\, \nabla_1)$ (respectively, $(V_2,\, \nabla_2)$) (see \eqref{a1}). From the constructions
of $({\mathcal W}_1,\, \nabla^{W_1})$ and $({\mathcal W}_2,\, \nabla^{W_2})$ it can be deduced
that for every $x\, \in\, B\,=\, X\setminus X_0$, the conjugacy
class of the local monodromy of $\nabla^{W_1}$ around $x$ coincides with the conjugacy
class of the local monodromy of $\nabla^{W_2}$ around $x$. Indeed, it is straightforward to
check that for every $x\, \in\, B\,=\, X\setminus X_0$, the conjugacy
class of the local monodromy --- around $x$ --- of the connection $\phi_*\nabla_1$ on $\phi_*V_1$
coincides with the conjugacy class of the local monodromy of $\phi_*\nabla_2$ around $x$. Now 
the earlier observation that the isomorphism $\mathcal I$ in \eqref{e7} is connection preserving implies
that for every $x\, \in\, B\,=\, X\setminus X_0$, the conjugacy
class of the local monodromy of $\nabla^{W_1}$ around $x$ coincides with the conjugacy
class of the local monodromy of $\nabla^{W_2}$ around $x$.

Let $\text{Hom}(\Delta,\, \text{GL}(r,{\mathbb C}))^{\rm ir}\, \subset\, \text{Hom}(\Delta,\, \text{GL}
(r,{\mathbb C}))$ be the space of irreducible representations of $\Delta$ in $\text{GL}(r,{\mathbb C})$.
The conjugation action of $\text{GL}(r,{\mathbb C})$ on itself produces an action of $\text{GL}
(r,{\mathbb C})$ on $\text{Hom}(\Delta,\, \text{GL}(r,{\mathbb C}))^{\rm ir}$. The corresponding quotient 
\begin{equation}\label{e12}
{\mathcal C}_r(Y_0)\ :=\ \text{Hom}(\Delta,\, \text{GL}(r,{\mathbb C}))^{\rm ir}/\text{GL}(r,{\mathbb C})
\end{equation}
is a complex manifold equipped with a holomorphic Poisson structure \cite{La}, \cite{BJ}, \cite{Go}.
Similarly, $\text{Hom}(\Gamma,\, \text{GL}(rd,{\mathbb C}))^{\rm ir}\, \subset\, \text{Hom}(\Gamma,\, \text{GL}
(rd,{\mathbb C}))$ is the space of irreducible representations of $\Gamma$ in $\text{GL}(rd,{\mathbb C})$.
As before,
\begin{equation}\label{e13}
{\mathcal C}_{rd}(X_0)\ :=\ \text{Hom}(\Gamma,\, \text{GL}(rd,{\mathbb C}))^{\rm ir}/\text{GL}(rd,{\mathbb C}) 
\end{equation}
is the quotient complex manifold equipped with a holomorphic Poisson structure. Sending
any $(V,\, \nabla)\, \in\, {\mathcal C}_r(Y_0)$ to its direct image $(\phi_* V,\, \phi_*\nabla)\, \in\,
{\mathcal C}_{rd}(X_0)$ (see \eqref{e12} and \eqref{e13}) we obtain a map
\begin{equation}\label{e14}
\Psi\ :\ {\mathcal C}_r(Y_0)\ \longrightarrow\ {\mathcal C}_{rd}(X_0).
\end{equation}

\begin{theorem}\label{thm1}
The map $\Psi$ in \eqref{e14} is compatible with the Poisson structures on ${\mathcal C}_r(Y_0)$
and ${\mathcal C}_{rd}(X_0)$.
\end{theorem}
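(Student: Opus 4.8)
The plan is to work at the level of the differential of $\Psi$ and to exploit the cohomological model of the Poisson structures together with Shapiro's lemma for the finite covering $\phi$. Since ${\mathcal C}_r(Y_0)$ and ${\mathcal C}_{rd}(X_0)$ are smooth complex manifolds with algebraic (holomorphic) Poisson bivectors and $\Psi$ is algebraic (recall $\mathsf{Frob}=\mathsf{Ind}\circ\mathsf{Res}$), it suffices to check Poisson compatibility at a general point: for $[\rho]$ with $[\rho]$ and $[\mathsf{ind}(\rho)]$ in the respective smooth loci, one must show that $d\Psi_{[\rho]}$ carries the Poisson bivector at $[\rho]$ to the Poisson bivector at $[\mathsf{ind}(\rho)]$. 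I would identify $T_{[\rho]}{\mathcal C}_r(Y_0)$ with $H^1(\Delta,\,\mathrm{End}(\mathbb{C}^r))$ (group cohomology for the coefficients $\mathrm{End}(\mathbb{C}^r)$ twisted by $\mathrm{Ad}\circ\rho$, equivalently $H^1$ of the flat bundle $\mathrm{End}(V)$ on $Y_0$), and likewise $T_{[\mathsf{ind}\rho]}{\mathcal C}_{rd}(X_0)\cong H^1(\Gamma,\,\mathrm{End}(\mathbb{C}^{rd}))\cong H^1$ of $\mathrm{End}(\phi_*V)$ on $X_0$. Under these identifications the Poisson bivector of \cite{Go,La,BJ} is the composite
\[
H^1\big(X_0,\,\mathrm{End}(\phi_*V)\big)^{\ast}\ \xrightarrow{\ \sim\ }\ H^1_c\big(X_0,\,\mathrm{End}(\phi_*V)\big)\ \longrightarrow\ H^1\big(X_0,\,\mathrm{End}(\phi_*V)\big),
\]
the first map being Poincar\'e--Lefschetz duality together with the (flat, nondegenerate) trace pairing, the second the natural ``forget supports'' map; and similarly on $Y_0$ with the trace pairing of $\mathrm{End}(V)$.

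The structural heart of the argument is Proposition \ref{prop-induced} and the construction of Section \ref{sec:directimage}. Because $(\phi_*V,\,\phi_*\nabla)$ realizes the induced representation $\mathsf{ind}(\rho)$, the flat bundle $\mathrm{End}(\phi_*V)$ on $X_0$ decomposes canonically into flat subbundles
\[
\mathrm{End}(\phi_*V)\ \cong\ \phi_*\mathrm{End}(V)\ \oplus\ {\mathcal K},
\]
with $\phi_*\mathrm{End}(V)$ the ``block-diagonal'' summand; the two summands are orthogonal for the trace pairing, and the inclusion $j$ and the trace-orthogonal projection $p$ onto $\phi_*\mathrm{End}(V)$ are flat bundle maps. (Representation-theoretically this is the Mackey decomposition of $\mathrm{Res}_\Delta\mathrm{Ind}_\Delta^\Gamma\mathbb{C}^r$, which isolates the constituent $\mathbb{C}^r$ attached to the trivial double coset, combined with the identity $\mathrm{End}(\mathrm{Ind}_\Delta^\Gamma\mathbb{C}^r)\cong\mathrm{Ind}_\Delta^\Gamma\mathrm{Hom}(\mathrm{Res}_\Delta\mathrm{Ind}_\Delta^\Gamma\mathbb{C}^r,\,\mathbb{C}^r)$.) Next I would compute $d\Psi$ directly: differentiating the explicit permutation-block formula for $\mathsf{ind}(\rho_t)$ along a path $\rho_t$ whose velocity is a $1$-cocycle, one finds the induced velocity is a block-diagonal-valued cocycle on $\Gamma$, so that
\[
d\Psi_{[\rho]}\ :\ H^1\big(\Delta,\,\mathrm{End}(\mathbb{C}^r)\big)\ \xrightarrow{\ \mathrm{Shapiro}\ }\ H^1\big(\Gamma,\,\mathrm{Ind}_\Delta^\Gamma\mathrm{End}(\mathbb{C}^r)\big)=H^1\big(X_0,\,\phi_*\mathrm{End}(V)\big)\ \xrightarrow{\ j_{\ast}\ }\ H^1\big(X_0,\,\mathrm{End}(\phi_*V)\big),
\]
that is, $d\Psi$ is the Shapiro isomorphism followed by the inclusion of the block-diagonal summand.

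Finally I would assemble the compatibility. Since $\phi$ is a finite covering, Shapiro's isomorphism is induced by a quasi-isomorphism of complexes (equivalently $R\phi_*=\phi_*$ is exact), so it is simultaneously compatible with the forget-supports maps $H^1_c\to H^1$, with cup products, and — after the transfer (corestriction) is inserted on top-degree compactly supported cohomology — with the trace pairings; together with the fact that $j$ and $p$ are mutually adjoint for the trace pairings, this turns the required identity $\Pi^{\sharp}_{X_0}=d\Psi_{[\rho]}\circ\Pi^{\sharp}_{Y_0}\circ(d\Psi_{[\rho]})^{\ast}$ into a statement internal to $\mathrm{End}(\phi_*V)$ relative to its summand $\phi_*\mathrm{End}(V)$. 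The step I expect to be the main obstacle is controlling the complementary summand ${\mathcal K}$ and the scalar bookkeeping coming from $\deg\phi$: one must understand, via the special block-permutation (``companion matrix'') shape of the local monodromies of $\phi_*\nabla$ around the points of $B$ worked out in Section \ref{sec:directimage}, exactly how ${\mathcal K}$ enters the duality pairing and the forget-supports map, and reconcile this with the image of $d\Psi$, checking that the factors of $d$ produced by the transfer cancel. Granting this, the bivector identity holds, and $\Psi$ is a morphism of Poisson manifolds. (An alternative, more computational route bypasses cohomology and checks the Poisson-bracket identity on the trace generators directly via Goldman's formula, using that $\Psi^{\ast}\tau_\gamma=\sum\tau_{\widetilde\gamma}$ summed over the components of $\phi^{-1}(\gamma)$ that cover $\gamma$ once, and matching intersection points of lifted curves upstairs with intersection points downstairs; the cohomological formulation makes the role of ${\mathcal K}$ more transparent.)
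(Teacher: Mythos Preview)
Your approach is essentially the paper's. Both identify the Poisson bivectors with the forget--supports maps $H^1_c\to H^1$, both use the flat splitting $\mathrm{End}(\phi_*\underline{V})=\phi_*\mathrm{End}(\underline{V})\oplus\mathcal{K}$ into block--diagonal and off--diagonal pieces, and both describe $d\Psi$ as the Shapiro isomorphism $H^1(Y_0,\mathrm{End}(\underline{V}))\cong H^1(X_0,\phi_*\mathrm{End}(\underline{V}))$ followed by the inclusion of that summand, with $(d\Psi)^*$ the trace--adjoint projection on compactly supported $H^1$. The paper then simply asserts that with these identifications the square
\[
\begin{matrix}
H^1_c(Y_0,\mathrm{End}(\underline{V})) & \longrightarrow & H^1(Y_0,\mathrm{End}(\underline{V}))\\
\Big\uparrow && \Big\downarrow\\
H^1_c(X_0,\mathrm{End}(\phi_*\underline{V})) & \longrightarrow & H^1(X_0,\mathrm{End}(\phi_*\underline{V}))
\end{matrix}
\]
commutes, and stops there.

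Where you diverge from the paper is in your list of anticipated obstacles, and several of these are red herrings. There is no ``scalar bookkeeping coming from $\deg\phi$'' and no transfer/corestriction factor to cancel: because $\phi$ is a finite \emph{unramified} cover, $\phi_*$ is exact and the identification $H^i_{(c)}(X_0,\phi_*\mathcal{F})=H^i_{(c)}(Y_0,\mathcal{F})$ is a genuine equality that intertwines the forget--supports maps and the Poincar\'e pairings on the nose; the paper uses exactly this (equations \eqref{e18}--\eqref{e20}) and never introduces a factor of $d$. Likewise the local monodromies of $\phi_*\nabla$ around $B$ play no role in the paper's argument for Theorem~\ref{thm1}; they are discussed at the beginning of Section~\ref{sec3} only to observe that $\Psi$ respects the foliation by boundary conjugacy classes, not as an input to the commutativity of the square. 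So you can drop those two concerns entirely.

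Your remaining concern --- that one must control the contribution of the complementary summand $\mathcal{K}$ to the bivector on $X_0$ --- is precisely the content of the paper's final ``straightforward to deduce'' sentence: once you know the forget--supports map is block--diagonal for the splitting $\phi_*\mathrm{End}(\underline{V})\oplus\mathcal{K}$ and that $(d\Psi)^*$ kills the $\mathcal{K}$--component while $d\Psi$ lands in the $\phi_*\mathrm{End}(\underline{V})$--component, the diagram check is immediate. The paper does not elaborate further; your instinct that there is something to say about $\mathcal{K}$ is reasonable, but the paper treats it as absorbed in the direct--summand description and you should do the same.
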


\begin{proof}
The holomorphic tangent (respectively, cotangent) bundle of ${\mathcal C}_r(Y_0)$ will be
denoted by $T{\mathcal C}_r(Y_0)$ (respectively, $T^*{\mathcal C}_r(Y_0)$). The Poisson structure
on ${\mathcal C}_r(Y_0)$ is given by a holomorphic homomorphism
\begin{equation}\label{e15}
P\ :\ T^*{\mathcal C}_r(Y_0) \ \longrightarrow\ T{\mathcal C}_r(Y_0)
\end{equation}
which is skew-symmetric and the Poisson bracket defined by $P$ satisfies the Jacobi identity. Similarly,
\begin{equation}\label{e16}
Q\ :\ T^*{\mathcal C}_{rd}(X_0) \ \longrightarrow\ T{\mathcal C}_{rd}(X_0)
\end{equation}
is the Poisson structure on ${\mathcal C}_{rd}(X_0)$. The map $\Psi$ in \eqref{e14} is said to be
compatible with the Poisson structures on ${\mathcal C}_r(Y_0)$ and ${\mathcal C}_{rd}(X_0)$ if the
following diagram of homomorphisms of vector bundles over ${\mathcal C}_r(Y_0)$ is commutative:
\begin{equation}\label{e17}
\begin{matrix}
T^*{\mathcal C}_r(Y_0) & \xrightarrow{\,\,\, P\,\,\,} & T{\mathcal C}_r(Y_0)\\
\, \,\,\,\,\, \Big\uparrow (d\Psi)^* && \,\,\,\, \Big\downarrow d\Psi\\
\Psi^*T^*{\mathcal C}_{rd}(X_0) & \xrightarrow{\,\,\, \Psi^*Q\,\,\,} & \Psi^* T{\mathcal C}_{rd}(X_0)
\end{matrix}
\end{equation}
where $P$ (respectively, $Q$) is constructed in \eqref{e15} (respectively, \eqref{e16})
and $d\Psi$ is the differential of the map $\Psi$ in \eqref{e14} while $(d\Psi)^*$ is its dual.

To prove that the diagram in \eqref{e17} is commutative, we first briefly recall the construction of
the homomorphisms $P$ and $Q$ (more details can be found in \cite{BJ}).

Take any flat vector bundle $\alpha\, =\, (V,\, \nabla^V)\, \in\, {\mathcal C}_r(Y_0)$ on $Y_0$ of rank $r$.
The locally constant sheaf on $Y_0$ given by the locally defined flat
sections of $V$ for the connection $\nabla$ will be denoted by $\underline{V}$.
The flat connection on $\text{End}(V)\,=\, V\otimes V^*$ induced by $\nabla^V$ will be denoted by $\nabla$.
Let $\underline{\rm End}(V)$ be the locally constant sheaf on $Y_0$ given by the locally defined flat
sections of $\text{End}(V)$ for the induced connection $\nabla$. Note that we have
\begin{equation}\label{a2}
\underline{\rm End}(V)\ = \ \text{End}(\underline{V}).
\end{equation}
The tangent space of ${\mathcal C}_r(Y_0)$ at the point $\alpha\, =\, (V,\, \nabla^V)$ has the
following description:
\begin{equation}\label{a4}
T_\alpha {\mathcal C}_r(Y_0)\ =\ H^1(Y_0,\, \underline{\rm End}(V)).
\end{equation}
Note that $\underline{\rm End}(V)\,=\, \underline{\rm End}(V)^*$; the isomorphism is given by the
trace map. Hence, using Poincar\'e duality,
\begin{equation}\label{a5}
T^*_\alpha {\mathcal C}_r(Y_0)\ =\ H^1_c(Y_0,\, \underline{\rm End}(V)),
\end{equation}
where $H^i_c$ denotes $i$--th cohomology with compact support. In terms of the isomorphisms
in \eqref{a4} and \eqref{a5}, the homomorphism $P$ in \eqref{e15} coincides with the natural map
\begin{equation}\label{e17b}
T^*_\alpha {\mathcal C}_r(Y_0)\ =\ H^1_c(Y_0,\, \underline{\rm End}(V))\ \longrightarrow\
H^1(Y_0,\, \underline{\rm End}(V))\ =\ T_\alpha {\mathcal C}_r(Y_0).
\end{equation}

Consider $$\beta \,:=\, \Psi(\alpha) \, =\, (\phi_*V,\, \phi_*\nabla^V)\, =:\,
(W,\, \phi_*\nabla^W) \, \in\, {\mathcal C}_{rd}(X_0),$$
where $\Psi$ and $\phi$ are the maps in \eqref{e14} and \eqref{e3} respectively.
The locally constant sheaf on $X_0$ given by the locally defined flat
sections of $W$ for the connection $\nabla^W$ will be denoted by $\underline{W}$.
Let $\widehat{\nabla}$
be the flat connection on $\text{End}(W)\,=\, W\otimes W^*$ induced by $\nabla^W$. Denote by
$\underline{\rm End}(W)$ the locally constant sheaf on $X_0$ given by the locally defined flat
sections of $\text{End}(W)$ for the induced connection $\widehat{\nabla}$; so we have
\begin{equation}\label{a3}
\underline{\rm End}(W)\ = \ \text{End}(\underline{W}).
\end{equation}
As before, we have
\begin{equation}\label{a6}
T_\beta {\mathcal C}_{rd}(X_0)\ =\ H^1(X_0,\, \underline{\rm End}(W)),\ \
T^* _\beta {\mathcal C}_{rd}(X_0)\ =\ H^1_c(X_0,\, \underline{\rm End}(W)).
\end{equation}

Next we will describe that homomorphism $d\Psi$ in \eqref{e17}.

For finite dimensional vector spaces $F_1,\, F_2,\, \ldots,\, F_n$, the direct sum
$\bigoplus_{j=1}^n \text{End}(F_j)$ is canonically a direct summand of
$\text{End}\left(\bigoplus_{j=1}^nF_j\right)$. The canonical complement of it is
$\bigoplus_{i,j=1, i\not= j}^n \text{Hom}(F_i,\, F_j)$. Note that the stalk of the locally constant
sheaf $\phi_* \underline{V}$ (see \eqref{a2} and \eqref{e3} for
$\underline{V}$ and $\phi$ respectively) at any $x\, \in\, X_0$ is the direct
sum of the stalks of $\underline{V}$ at the points of $\phi^{-1}(x)$. Using this, from the above observation
that $\bigoplus_{j=1}^n \text{End}(F_j)$ is canonically a direct summand of
$\text{End}\left(\bigoplus_{j=1}^nF_j\right)$ it follows that the locally constant sheaf
$\phi_* \text{End}(\underline{V})$ on $X_0$ is canonically a direct summand of
$\text{End}(\phi_*\underline{V})$. In particular, we have a canonical injective homomorphism
\begin{equation}\label{e18}
H^1(X_0,\, \phi_* \text{End}(\underline{V})) \ \hookrightarrow\
H^1(X_0,\, \text{End}(\phi_*\underline{V})).
\end{equation}
Since $\phi$ is an unramified covering map,
$$
H^1(X_0,\, \phi_* \text{End}(\underline{V}))\ =\ H^1(Y_0,\, \text{End}(\underline{V})).
$$
Combining this with \eqref{e18}, we get an injective homomorphism
$$
H^1(Y_0,\, \text{End}(\underline{V}))\ \hookrightarrow\ H^1(X_0,\, \text{End}(\phi_*\underline{V})).
$$
In view of \eqref{a4} and \eqref{a6}, this injective homomorphism produces an injective
homomorphism
\begin{equation}\label{e19}
T_\alpha {\mathcal C}_r(Y_0)\ =\ H^1(Y_0,\, \text{End}(\underline{V}))\ \longrightarrow
\ H^1(X_0,\, \text{End}(\phi_*\underline{V}))\ =\ T_\beta {\mathcal C}_{rd}(X_0).
\end{equation}
The homomorphism $d\Psi$ in \eqref{e17} at the point $\alpha$ coincides with the injective homomorphism in \eqref{e19}.

Since $\phi_* \text{End}(\underline{V})$ is canonically a direct summand of $\text{End}(\phi_*\underline{V})$,
there is a natural projection $\text{End}(\phi_*\underline{V})\, \longrightarrow\, \phi_* \text{End}(
\underline{V})$. Note that using the fact that $\phi$ is an unramified covering map --- which implies that
$\phi$ is a proper map --- we have a homomorphism
\begin{equation}\label{e20-1}
H^1_c(X_0,\, \text{End}(\phi_*\underline{V}))\ \longrightarrow\ H^1_c(X_0,\,
\phi_* \text{End}(\underline{V}))\ =\ H^1_c(Y_0,\, \text{End}(\underline{V})).
\end{equation}
Note that the homomorphism in \eqref{e20-1} is surjective because $\phi_* \text{End}(\underline{V})$ is
a direct summand of $\text{End}(\phi_*\underline{V})$.
Now incorporating \eqref{a5} and \eqref{a6} into \eqref{e20-1}, we get a homomorphism
\begin{equation}\label{e20}
T^*_\beta {\mathcal C}_{rd}(X_0)\ =\ H^1_c(X_0,\, \text{End}(\phi_*\underline{V}))
\ \longrightarrow\ H^1_c(Y_0,\, \text{End}(\underline{V}))\ =\ T^*_\alpha {\mathcal C}_r(Y_0);
\end{equation}
this homomorphism is surjective because the homomorphism in \eqref{e20-1} is so.

The homomorphism $(d\Psi)^*$ in \eqref{e17} at $\beta$ coincides with the surjective homomorphism in \eqref{e20}.

Using the above constructions of the homomorphisms in \eqref{e17} it is straightforward to deduce
that the diagram in \eqref{e17} is commutative.
\end{proof}

\section{Pullback of flat connections}

Let $E$ be a $C^\infty$ vector bundle of rank $n$ on $X_0$ equipped with a flat connection $\nabla^E$.
So the pullback $(\phi^*E, \phi^*\nabla^E)$, by the map $\phi$ in \eqref{e3}, is a vector bundle on $Y_0$
with a flat connection. As before, define
\begin{equation}\label{e21}
{\mathcal C}_{n}(X_0)\ :=\ \text{Hom}(\Gamma,\, \text{GL}(n,{\mathbb C}))^{\rm ir}/\text{GL}(n,{\mathbb C}),
\end{equation}
\begin{equation}\label{e22}
{\mathcal C}_n(Y_0)\ :=\ \text{Hom}(\Delta,\, \text{GL}(n,{\mathbb C}))^{\rm ir}/\text{GL}(n,{\mathbb C}).
\end{equation}
Sending any $(E,\, \nabla^E)\, \in\, {\mathcal C}_{n}(X_0)$ to $(\phi^*E, \phi^*\nabla^E)$ we construct
a map
\begin{equation}\label{e23}
\Phi\ :\ {\mathcal C}_{n}(X_0)\ \longrightarrow\ {\mathcal C}_n(Y_0).
\end{equation}

\begin{proposition}\label{prop1}
The map $\Phi$ in \eqref{e23} is compatible with the Poisson structures on ${\mathcal C}_n(Y_0)$
and ${\mathcal C}_{n}(X_0)$.
\end{proposition}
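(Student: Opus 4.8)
The plan is to mirror the proof of Theorem~\ref{thm1}, with the roles of $X_0$ and $Y_0$ interchanged and the direct image replaced by the inverse image. Fix $\beta\,=\,(E,\,\nabla^E)\,\in\,{\mathcal C}_n(X_0)$ and set $\alpha\,:=\,\Phi(\beta)\,=\,(\phi^*E,\,\phi^*\nabla^E)\,\in\,{\mathcal C}_n(Y_0)$. Exactly as in \eqref{a4}, \eqref{a5} and \eqref{e17b}, I would identify
$$T_\beta{\mathcal C}_n(X_0)\,=\,H^1(X_0,\,\underline{\rm End}(E)),\quad T^*_\beta{\mathcal C}_n(X_0)\,=\,H^1_c(X_0,\,\underline{\rm End}(E)),$$
$$T_\alpha{\mathcal C}_n(Y_0)\,=\,H^1(Y_0,\,\underline{\rm End}(\phi^*E)),\quad T^*_\alpha{\mathcal C}_n(Y_0)\,=\,H^1_c(Y_0,\,\underline{\rm End}(\phi^*E)),$$
using Poincar\'e duality and the trace self-duality of the endomorphism local systems, with the Poisson homomorphisms $Q$ on ${\mathcal C}_n(X_0)$ and $P$ on ${\mathcal C}_n(Y_0)$ being, in these terms, the natural ``forgetting the supports'' maps $H^1_c\,\longrightarrow\,H^1$.

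To describe $d\Phi$ and $(d\Phi)^*$, note that since $\phi$ is an unramified (hence finite and proper) covering, $\underline{\rm End}(\phi^*E)\,=\,\phi^*\underline{\rm End}(E)$ and there are canonical identifications $H^1(Y_0,\,\phi^*\underline{\rm End}(E))\,=\,H^1(X_0,\,\phi_*\phi^*\underline{\rm End}(E))$ and $H^1_c(Y_0,\,\phi^*\underline{\rm End}(E))\,=\,H^1_c(X_0,\,\phi_*\phi^*\underline{\rm End}(E))$. The unit of the adjunction between $\phi^*$ and $\phi_*$ gives a canonical homomorphism of local systems $\underline{\rm End}(E)\,\hookrightarrow\,\phi_*\phi^*\underline{\rm End}(E)$ on $X_0$ whose composition with the trace map (summing over the $d$ sheets) $\phi_*\phi^*\underline{\rm End}(E)\,\longrightarrow\,\underline{\rm End}(E)$ is multiplication by $\deg\phi\,=\,d$; since $\mathbb C$ has characteristic zero, $\underline{\rm End}(E)$ is thus canonically a direct summand of $\phi_*\phi^*\underline{\rm End}(E)$, and the decomposition is orthogonal for the trace pairings on the fibers. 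This is the exact counterpart, with the two surfaces exchanged, of the splitting $\phi_*\underline{\rm End}(V)\,\subset\,\underline{\rm End}(\phi_*V)$ used in Theorem~\ref{thm1}. Then, as in \eqref{e19}, $d\Phi$ at $\beta$ is the injection $H^1(X_0,\,\underline{\rm End}(E))\,\hookrightarrow\,H^1(X_0,\,\phi_*\phi^*\underline{\rm End}(E))\,=\,H^1(Y_0,\,\underline{\rm End}(\phi^*E))$ induced by the summand inclusion, and, as in \eqref{e20}, $(d\Phi)^*$ at $\alpha$ is the dual surjection $H^1_c(Y_0,\,\underline{\rm End}(\phi^*E))\,=\,H^1_c(X_0,\,\phi_*\phi^*\underline{\rm End}(E))\,\longrightarrow\,H^1_c(X_0,\,\underline{\rm End}(E))$ induced by the projection onto the summand.

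With these descriptions in place, the commutativity of the Poisson diagram for $\Phi$ (the analogue of \eqref{e17}) follows exactly as in Theorem~\ref{thm1}: $(d\Phi)^*$ and $d\Phi$ are the projection onto, and the inclusion of, a canonical direct summand, $Q$ and $P$ are the forgetting-supports homomorphisms for $\underline{\rm End}(E)$ and for $\phi_*\phi^*\underline{\rm End}(E)$ respectively, and the latter is natural for the decomposition $\phi_*\phi^*\underline{\rm End}(E)\,=\,\underline{\rm End}(E)\oplus\big(\underline{\rm End}(E)\otimes{\mathcal M}\big)$, where $\mathcal M$ is the reduced permutation local system of $\phi$. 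The main obstacle I anticipate is the bookkeeping of the first two paragraphs: checking that the unit and the trace are genuinely transpose under the Poincar\'e-duality and trace self-duality identifications (so that $d\Phi$ and $(d\Phi)^*$ are indeed the summand inclusion and projection and are dual to each other), and that the summand splits off --- the point where characteristic zero is used. Given that, the commutativity of the diagram is formal, as in the proof of Theorem~\ref{thm1}.
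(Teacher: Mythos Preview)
Your proposal is correct and follows essentially the same approach as the paper: both identify $d\Phi$ with the pullback map on $H^1$, realize it via the unit $\underline{\rm End}(E)\hookrightarrow\phi_*\phi^*\underline{\rm End}(E)$, split this using the fiberwise averaging/trace (dividing by $d$, hence the characteristic-zero remark), identify $(d\Phi)^*$ with the resulting projection on $H^1_c$, and then observe that the commutativity of the Poisson square is formal. The only cosmetic difference is that the paper first builds the splitting $G,H$ at the level of the bundle $E$ itself and then passes to $\mathrm{End}$, whereas you work directly with the local system $\underline{\rm End}(E)$; this is a harmless streamlining.
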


\begin{proof}
We will describe the differential $d\Phi$ of the map in \eqref{e23}. Take
$(E,\, \nabla^E)\, \in\, {\mathcal C}_{n}(X_0)$. Denote by $\underline{E}$ the locally
constant sheaf on
$X_0$ given by the locally defined flat sections of $E$ for the connection $\nabla^E$.
The flat connection on $\text{End}(E)\,=\, E\otimes E^*$ induced by $\nabla^E$ will
be denoted by $\nabla$. Let $\underline{\rm End}(E)$ be the locally constant sheaf on $X_0$ given by the
locally defined flat sections of $\text{End}(E)$ for the induced connection $\nabla$. We have
$\underline{\rm End}(E)\, = \, \text{End}(\underline{E})$. The locally
constant sheaf on
$Y_0$ given by the locally defined flat sections of $\phi^*E$ for the flat
connection $\phi^*\nabla^E$ coincides with the pullback $\phi^*\underline{E}$.
The locally constant sheaf on $Y_0$, given by the locally defined flat
sections of $\text{End}(\phi^* E)\,=\, \phi^*\text{End}(E)$ for the
flat connection induced by $\phi^* \nabla$, coincides with $$\phi^*\underline{\rm End}(E)
\ = \ \phi^* \text{End}(\underline{E}) \ = \ \text{End}(\phi^*\underline{E}).$$

We have $T_{(E,\nabla^E)}{\mathcal C}_{n}(X_0)\,=\, H^1(X_0,\, \text{End}(\underline{E}))$ and
$$T_{(\phi^*E,\phi^*\nabla^E)}{\mathcal C}_{n}(Y_0)\,=\, H^1(Y_0,\, \text{End}(\phi^* \underline{E}))
\ =\ H^1(Y_0,\, \phi^*\text{End}(\underline{E})).
$$
using these two isomorphisms, the differential $d\Phi \,:\, T_{(E,\nabla^E)}{\mathcal C}_{n}(X_0)
\,\longrightarrow\, T_{(\phi^*E,\phi^*\nabla^E)}{\mathcal C}_{n}(Y_0)$ at the point $(E,\,
\nabla^E)\, \in\, {\mathcal C}_{n}(X_0)$ coincides with the natural pullback homomorphism
\begin{equation}\label{e24}
H^1(X_0,\, \text{End}(\underline{E}))\ \longrightarrow\ 
H^1(Y_0,\, \phi^*\text{End}(\underline{E})).
\end{equation}
The differential $d\Phi$ of the map in \eqref{e23} at the point $(E,\, \nabla^E)\, \in\,{\mathcal C}_{n}(X_0)$
coincides with the homomorphism in \eqref{e24}. Next, the dual homomorphism $(d\Phi)^*$ will be described.

First it will be shown that there is a natural surjective homomorphism
\begin{equation}\label{e25}
H\ :\ \phi_* \phi^* E \ \longrightarrow\ E
\end{equation}
that has a canonical splitting.
For any $x\, \in\, X_0$, the fiber of $(\phi_* \phi^* E)_x$ is the direct sum
$$\bigoplus_{y\in \phi^{-1}(x)} (\phi^* E)_y\ =\ \bigoplus_{y\in \phi^{-1}(x)} E_x.$$
So we have a map $\bigoplus_{y\in \phi^{-1}(x)} E_x\, \longrightarrow\, E_x$ defined by
$\bigoplus_{y\in \phi^{-1}(x)} v_y \, \longmapsto \, \sum_{y\in \phi^{-1}(x)} v_y$, where
$v_y\,\in\, E_x$. This pointwise construction produces the homomorphism $H$ in \eqref{e25}.
On the other hand, we have the homomorphism
\begin{equation}\label{e25b}
G \ :\ E \longrightarrow\ \phi_* \phi^* E
\end{equation}
that sends any $v\, \in\, E_x$ to $\frac{1}{d}\sum_{y\in \phi^{-1}(x)} v$, where $d\,=\, \text{degree}(f)$.
Clearly, we have $H\circ G \,=\, {\rm Id}_E$.

It is straightforward to check that the homomorphism $H$ in \eqref{e25} takes the connection
$\phi_*\phi^*\nabla^E$ on $\phi_* \phi^* E$ to the connection $\nabla^E$ on $E$. Similarly, the
homomorphism $G$ in \eqref{e25b} takes the connection $\nabla^E$ on $E$ to the connection
$\phi_*\phi^*\nabla^E$ on $\phi_* \phi^* E$.

Let $\widehat{\underline E}$ be the locally constant sheaf on $X_0$ given by the locally defined flat sections of
$\phi_*\phi^* E$ for the flat connection $\phi_*\phi^*\nabla^E$. Since $H$ in \eqref{e25} intertwines the
connections $\phi_*\phi^*\nabla^E$ and $\nabla^E$ on $\phi_*\phi^* E$ and $E$ respectively, it produces a
surjective homomorphism
$$
\widehat{\underline E} \ \longrightarrow\ \underline{E}.
$$
As $G$ in \eqref{e25b} also intertwines the
connections $\phi_*\phi^*\nabla^E$ and $\nabla^E$ on $\phi_*\phi^* E$ and $E$ respectively,
it follows that $\underline{E}$ is canonically a direct summand of $\widehat{\underline E}$. In view of the
fact that $\underline{E}$ is canonically a direct summand of $\widehat{\underline E}$, we have a natural
projection
\begin{equation}\label{e27}
\text{End}(\widehat{\underline E})\ \longrightarrow\ \text{End}(\underline{E}).
\end{equation}
More explicitly, if $\widehat{\underline E}\,=\, \underline{E}\oplus {\mathcal A}$, then
$$
\text{End}(\widehat{\underline E})\ =\ \text{End}(\underline{E})\oplus \text{End}({\mathcal A})
\oplus (\underline{E}^*\otimes {\mathcal A})\oplus (\underline{E}\otimes {\mathcal A}^*).
$$
This decomposition produces a projection $\text{End}(\widehat{\underline E})\, \longrightarrow\,
\text{End}(\underline{E})$ as in \eqref{e27}.

Since $H^1_c(Y_0,\, \phi^*\text{End}(\underline{E}))\ =\ H^1_c(X_0,\, \text{End}(\widehat{\underline E}))$,
the homomorphism in \eqref{e27} produces a surjective homomorphism
\begin{equation}\label{e28}
{\mathcal H}\ :\ H^1_c(Y_0,\, \phi^*\text{End}(\underline{E}))\ =\
H^1_c(X_0,\, \text{End}(\widehat{\underline E}))\ \longrightarrow\
H^1_c(X_0,\, \text{End}(\underline{E})).
\end{equation}

The dual of the differential $(d\Phi)^*$ of $\Phi$ at the point $\Phi((E,\, \nabla^E))$ is a homomorphism
$$
T^*_{\Phi((E,\ \nabla^E))} {\mathcal C}_n(Y_0)\ =\
H^1_c(Y_0,\, \phi^*\text{End}(\underline{E})) \ \longrightarrow\
T^*_{(E,\ \nabla^E)} {\mathcal C}_n(X_0)\ =\ H^1_c(X_0,\, \text{End}(\underline{E})).
$$
This homomorphism coincides with the homomorphism ${\mathcal H}$ in \eqref{e28}.

Let ${\mathbb P}\, :\, T^* {\mathcal C}_n(X_0)\, \longrightarrow\, T{\mathcal C}_n(X_0)$ and
${\mathbb Q}\, :\, T^* {\mathcal C}_n(Y_0)\, \longrightarrow\, T{\mathcal C}_n(Y_0)$ be the
holomorphic Poisson structures on ${\mathcal C}_n(X_0)$ and ${\mathcal C}_n(Y_0)$ respectively. From the
above descriptions of $d\Phi$ and $(d\Phi)^*$ it follows that the following diagram is commutative:
$$
\begin{matrix}
T^* {\mathcal C}_n(X_0) & \xrightarrow{\,\,\, {\mathbb P}\,\,\,} & T {\mathcal C}_n(X_0)\\
\, \,\,\,\,\, \Big\uparrow (d\Phi)^* && \,\,\,\, \Big\downarrow d\Phi\\
\Phi^*T^*{\mathcal C}_n(Y_0) & \xrightarrow{\,\,\, \Phi^*{\mathbb Q}\,\,\,} & \Phi^*T{\mathcal C}_n(Y_0)
\end{matrix}
$$
In other words, the map $\Phi$ is compatible with the holomorphic Poisson structures on ${\mathcal C}_n(Y_0)$
and ${\mathcal C}_{n}(X_0)$.
\end{proof}

\begin{proposition}
The symplectic form on $($symplectic leaves of $)$ $\mathcal{M}_r(Y)$ is ${\rm degree}(f)$--times
the pullback of the symplectic form on $\mathcal{M}_r(X)$ under $f\,:\, Y \,\longrightarrow\, X$.
\end{proposition}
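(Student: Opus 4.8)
The plan is to reduce the statement to the naturality of the cup product combined with the elementary identity $f_*[Y]\,=\,{\rm degree}(f)\cdot[X]$ in second homology.

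First I would pin down the comparison map. The branched covering $f$ in \eqref{e1}, being continuous, induces $f_*\,:\,\pi_1(Y)\,\longrightarrow\,\pi_1(X)$ and hence a morphism of character varieties
$$
\widehat f\ :\ \mathcal{M}_r(X)\ \longrightarrow\ \mathcal{M}_r(Y),\qquad [\rho]\ \longmapsto\ [\rho\circ f_*],
$$
which sends a flat rank $r$ bundle on $X$ to its pullback to $Y$. This pullback is flat on all of $Y$, with no branch--locus obstruction, precisely because $\rho$ is already defined on $\pi_1(X)$; this is the situation of Proposition \ref{prop1} and \eqref{e23}, now with the closed surfaces $X,\,Y$ in place of $X_0,\,Y_0$. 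The goal is to show that $(\widehat f)^*\omega_{\mathcal{M}_r(Y)}\,=\,{\rm degree}(f)\cdot\omega_{\mathcal{M}_r(X)}$, which is the asserted relation.

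Next I would recall, in the language of Section \ref{sec3}, the Goldman--Atiyah--Bott form \cite{Go, La, BJ}: over a smooth point $\alpha\,=\,(E,\nabla^E)$ one has $T_\alpha\mathcal{M}_r(X)\,=\,H^1(X,\,\underline{\rm End}(E))$, and, using self-duality of $\underline{\rm End}(E)$ via the trace together with Poincar\'e duality on the closed surface $X$, the symplectic pairing is $\omega_X(a,b)\,=\,\int_X\mathrm{tr}(a\cup b)$, the cup product followed by the trace and evaluated on $[X]$; likewise for $Y$. Exactly as for $d\Phi$ in the proof of Proposition \ref{prop1} (see \eqref{e24}), the differential of $\widehat f$ at $\alpha$ is the pullback homomorphism $f^*\,:\,H^1(X,\underline{\rm End}(E))\,\longrightarrow\,H^1(Y,\underline{\rm End}(f^*E))$. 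Since cup product and the trace pairing commute with pullback of local systems, $\mathrm{tr}(f^*a\cup f^*b)\,=\,f^*\mathrm{tr}(a\cup b)$ in $H^2(Y,\C)$; and as $f$ has degree $d$ with finite branch locus, $f_*[Y]\,=\,d\,[X]$, so $\int_Y f^*\eta\,=\,d\int_X\eta$ for every $\eta\in H^2(X,\C)$. Therefore
$$
\omega_Y(f^*a,\,f^*b)\ =\ \int_Y\mathrm{tr}(f^*a\cup f^*b)\ =\ d\int_X\mathrm{tr}(a\cup b)\ =\ d\,\omega_X(a,b).
$$
Restricting to the symplectic leaves (the smooth, e.g.\ irreducible, loci) gives the statement; equivalently, as $f^*$ is split injective on $\C$-coefficient cohomology by transfer, $\widehat f$ is an immersion and on its image $\omega_{\mathcal{M}_r(Y)}$ is ${\rm degree}(f)$ times the form carried over from $\omega_{\mathcal{M}_r(X)}$.

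The steps needing any care — and I expect none to be a genuine obstacle — are: identifying $d\widehat f$ with the local-system pullback at possibly singular points, which goes verbatim as the corresponding step for $\Phi$ in Proposition \ref{prop1}; the fact that a closed surface is aspherical, so that its fundamental group computes cohomology with local coefficients (the sphere case being vacuous, as the form vanishes there); and the behaviour across the branch locus, which is harmless because that locus is finite, hence irrelevant both for $f_*[Y]=d[X]$ and for the integration, while off it $f$ is an honest $d$-sheeted covering and the endomorphism local system pulls back as in Section \ref{sec3}. The only substantive input is thus the naturality of the cup product together with $f_*[Y]=d[X]$.
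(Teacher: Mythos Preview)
Your proposal is correct and follows essentially the same route as the paper: identify the tangent space with $H^1$ in the endomorphism local system, write the Goldman form as $\int\mathrm{tr}(\cdot\cup\cdot)$, identify the differential of the pullback map with $f^*$ on cohomology, and use that integration of a pulled-back top class over a degree-$d$ (branched) cover multiplies by $d$. The paper's proof is the same computation, only stated more tersely in de Rham language ($\int_Y\mathrm{Trace}(\beta_1\wedge\beta_2)=d\int_X\mathrm{Trace}(\alpha_1\wedge\alpha_2)$) and without the auxiliary remarks on asphericity, transfer, and the branch locus that you include.
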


\begin{proof}
Let $\alpha_1,\, \alpha_2 \,\in\, H^1(X, \text{End}(\underline{F})).$ These are elements of the tangent space
of $\mathcal{M}_r(X).$ Then the symplectic form on $\mathcal{M}_r(X)$ is
\begin{equation}
\omega (\alpha_1,\, \alpha_2)\ =\ \int_X \text{Trace}(\alpha_1 \wedge \alpha_2).
\end{equation}

Let $\beta_j \,\in\, H^1 (Y, \text{End}(f^*\underline{F}))$ be the pullback
of $\alpha_j$. We then have
$$ \omega (\beta_1,\, \beta_2)\ =\ \int_Y \text{Trace}(\beta_1 \wedge \beta_2)\ =\
\text{degree}(f)\cdot \omega (\alpha_1,\, \alpha_2)$$
since $f^* \alpha_j \,=\, \beta_j.$
\end{proof}

\section{Composition of Maps}

First set $n\,=\, r$ in \eqref{e23}. Let
$$
\Phi_r\ :\ {\mathcal C}_{r}(X_0)\ \longrightarrow\ {\mathcal C}_r(Y_0)
$$
be the map in \eqref{e23}. Consider $\Psi$ in \eqref{e14}. We will describe the composition
of maps $\Psi\circ\Phi_r$.

Let $L_0\, :=\, Y_0\times {\mathbb C}\, \longrightarrow\, Y_0$ be the trivial complex line bundle
equipped with the trivial connection $d^0$ given by the de Rham differential on $C^\infty$
function on $Y_0$. Let
\begin{equation}\label{e29}
({\mathcal E}_0, \, \nabla^0)\ :=\ (\phi_*L_0,\, \phi_*d^0)
\end{equation}
be the flat vector bundle on $X_0$ obtained by taking the direct image of $(L_0,\, d^0)$ (see
\eqref{e31}).

\begin{lemma}\label{lem1}
Take any $\alpha\, :=\, (E,\, \nabla)\, \in\, {\mathcal C}_{r}(X_0)$, where ${\mathcal C}_{r}(X_0)$ is
as in \eqref{e21} (recall that $n\,=\,r$). Then the flat vector bundle
$\Psi\circ\Phi_r (\alpha)$ on $X_0$ is isomorphic to the flat vector bundle
$$(E\otimes {\mathcal E}_0,\ \nabla\otimes {\rm Id}_{{\mathcal E}_0}+{\rm Id}_E\otimes \nabla^0),$$
where $({\mathcal E}_0, \, \nabla^0)$ is constructed in \eqref{e29}.
\end{lemma}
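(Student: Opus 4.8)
The plan is to use the projection-formula-type identity $\phi_*(\phi^*E) \cong E \otimes \phi_*(L_0)$ at the level of flat bundles, where $L_0$ is the trivial line bundle with trivial connection and $\phi_*\phi^*E$ carries the connection $\phi_*\phi^*\nabla$. First I would unwind the two maps: $\Phi_r(\alpha) = (\phi^*E,\, \phi^*\nabla)$ is the pullback, and then $\Psi$ applied to this is $(\phi_*\phi^*E,\, \phi_*\phi^*\nabla)$ by the definition in \eqref{e14} together with Proposition \ref{prop-induced} (or simply the construction of the direct image connection in \eqref{e31}). So the content of the lemma is the isomorphism of flat bundles
$$(\phi_*\phi^*E,\, \phi_*\phi^*\nabla)\ \cong\ (E\otimes \mathcal{E}_0,\ \nabla\otimes \mathrm{Id} + \mathrm{Id}\otimes \nabla^0).$$

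The key step is to produce a bundle isomorphism $\phi_*\phi^*E \cong E \otimes \phi_* L_0$ and check it intertwines the connections. Fiberwise over $x \in X_0$, both sides are naturally identified: $(\phi_*\phi^*E)_x = \bigoplus_{y \in \phi^{-1}(x)} (\phi^*E)_y = \bigoplus_{y \in \phi^{-1}(x)} E_x$, while $(E \otimes \phi_*L_0)_x = E_x \otimes \bigoplus_{y\in\phi^{-1}(x)} \mathbb{C} = \bigoplus_{y\in\phi^{-1}(x)} E_x$, and these identifications are canonical and visibly compatible as $x$ varies over a small evenly-covered neighborhood. This gives a $C^\infty$ bundle isomorphism. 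To see it is connection-preserving, I would work locally: over an evenly-covered open set $U \subset X_0$ with $\phi^{-1}(U) = \bigsqcup_{i=1}^d U_i$, the direct image connection $\phi_*\phi^*\nabla$ is, by construction \eqref{e31}, just the diagonal connection $\bigoplus_i \nabla|_U$ on $\bigoplus_{i=1}^d E|_U$, and likewise $\mathcal{E}_0 = \phi_*L_0$ restricted to $U$ is the trivial rank-$d$ bundle with trivial connection $\bigoplus_i d^0$; hence $\nabla \otimes \mathrm{Id} + \mathrm{Id}\otimes\nabla^0$ on $E|_U \otimes \mathcal{O}_U^{\oplus d}$ is also $\bigoplus_i \nabla|_U$ under the identification above. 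Since the local identifications are the canonical ones, they patch to a global connection-preserving isomorphism.

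Alternatively, and perhaps more cleanly, I would argue at the level of monodromy representations using Proposition \ref{prop-induced}: $\Psi\circ\Phi_r(\alpha)$ has monodromy $\mathsf{ind}_{\Delta}^{\Gamma}(\mathrm{Res}^{\Gamma}_{\Delta}\sigma)$, where $\sigma: \Gamma \to \mathrm{GL}(r,\mathbb{C})$ is the monodromy of $(E,\nabla)$. The standard projection-formula isomorphism of $\Gamma$-modules $\mathrm{Ind}_{\Delta}^{\Gamma}(\mathrm{Res}_{\Delta}^{\Gamma} M) \cong M \otimes_{\mathbb{C}} \mathrm{Ind}_{\Delta}^{\Gamma}(\mathbb{C})$ (valid since $[\Gamma:\Delta] = d < \infty$) then identifies this with $\sigma \otimes \mathsf{ind}_\Delta^\Gamma(\mathbf{1})$, and $\mathsf{ind}_\Delta^\Gamma(\mathbf{1})$ is precisely the monodromy of $(\mathcal{E}_0,\nabla^0) = (\phi_*L_0, \phi_*d^0)$ by Proposition \ref{prop-induced} applied with $V = L_0$. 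Translating back via the monodromy-equivalence of flat bundles on $X_0$ yields the claimed isomorphism. The main obstacle is bookkeeping: making sure the natural fiberwise identifications are genuinely coherent (independent of choices of local trivializations / ordering of sheets) so they glue, and tracking that the "induced connection from induced representation" really is the tensor-product connection rather than some twist; both are routine but must be stated carefully, and the representation-theoretic route sidesteps the gluing by replacing it with the well-known projection formula.
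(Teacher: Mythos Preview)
Your proposal is correct, and your second ``alternative'' argument via the projection formula for induced representations, $\mathrm{Ind}_{\Delta}^{\Gamma}(\mathrm{Res}_{\Delta}^{\Gamma} V)\cong V\otimes \mathrm{Ind}_{\Delta}^{\Gamma}(\mathbb{C})$, is exactly the paper's proof: the authors state this isomorphism of $\Gamma$-modules abstractly and then specialize $\Gamma_0=\Gamma$, $\Delta_0=\Delta$, $V=E_{\phi(y_0)}$.

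Your first argument --- constructing the isomorphism $\phi_*\phi^*E\cong E\otimes\phi_*L_0$ directly at the bundle level over evenly-covered neighborhoods and checking that the direct-image connection matches the tensor-product connection --- is a genuinely different route. It is more concrete and self-contained (no appeal to the monodromy equivalence or to Proposition~\ref{prop-induced}), but it trades the one-line citation of the module-theoretic projection formula for the gluing bookkeeping you flag at the end. The paper's approach is shorter precisely because it offloads that bookkeeping onto the Riemann--Hilbert correspondence already established in Section~\ref{sec:directimage}; your bundle-level argument would be preferable in a context where that correspondence is not already in hand.
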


\begin{proof}
Let $\Gamma_0$ be a finitely presented group and $\Delta_0\, \subset\, \Gamma_0$ a subgroup of
finite index. Let $A_0$ be the trivial complex $\Delta_0$--module of dimension one. Let ${\mathbb L}$
be the corresponding $\Gamma_0$--module (see the construction of $W$ in \eqref{e32}).

Now let $V$ be a finite dimensional complex $\Gamma_0$--module. Since $\Delta_0$ is a subgroup of
$\Gamma_0$, restricting the action of $\Gamma_0$ on $V$ to the subgroup $\Delta_0$ we get a
$\Delta_0$--module; this $\Delta_0$--module is denoted by $\mathbb V$. Note that as vector spaces,
$\mathbb V$ coincides with $V$. Now let $\widehat{\mathbb V}$ be the $\Gamma_0$--module corresponding
to the $\Delta_0$--module $\mathbb V$ (see the construction in \eqref{e32}). Then the
$\Gamma_0$--module $\widehat{\mathbb V}$ is isomorphic to the $\Gamma_0$--module $V\otimes {\mathbb L}$.

Now set $\Gamma_0\,=\, \Gamma\,:=\, \pi_1(X_0,\, \phi(y_0))$ and
$\Delta_0\,=\, \Delta\, :=\, \phi_*(\pi_1(Y_0,\, y_0))$. Also set $V$ to be the $\pi_1(X_0,\,
\phi(y_0))$--module given by $E_{\phi(y_0)}$ equipped with the monodromy action of
$\pi_1(X_0,\, \phi(y_0))$ for the flat connection $\nabla$ on $E$. Now
the lemma follows immediately from the above fact. 
\end{proof}

Next set $n\,=\, r\cdot \text{degree}(f)\,=\, rd$ in \eqref{e23}. Let
$$
\Phi_{rd}\ :\ {\mathcal C}_{rd}(X_0)\ \longrightarrow\ {\mathcal C}_{rd}(Y_0)
$$
be the map in \eqref{e23}. Consider $\Psi$ in \eqref{e14}. We will describe the composition
of maps $\Phi_{rd}\circ \Psi$.

\begin{lemma}\label{lem2}
Take any $\beta\, :=\, (E,\, \nabla)\, \in\, {\mathcal C}_{r}(Y_0)$, where ${\mathcal C}_{r}(Y_0)$ is
as in \eqref{e22}. Then the flat vector bundle
$\Phi_{rd}\circ\Psi (\beta)$ on $X_0$ is isomorphic to the flat vector bundle
$$(E^{\oplus d},\ \nabla^{\oplus d}).$$
\end{lemma}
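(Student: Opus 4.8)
The plan is to work entirely at the level of modules over the fundamental groups, exactly as in the proof of Lemma \ref{lem1}. Set $\Gamma\,=\,\pi_1(X_0,\,\phi(y_0))$ and $\Delta\,=\,\phi_*(\pi_1(Y_0,\,y_0))$, so that $\Delta$ is a finite-index subgroup of $\Gamma$ with $[\Gamma:\Delta]\,=\,d$. The representation $\beta\,=\,(E,\,\nabla)\,\in\,{\mathcal C}_r(Y_0)$ corresponds to a $\Delta$--module, which I will call $U$, of dimension $r$. By Proposition \ref{prop-induced}, the flat bundle $\Psi(\beta)$ on $X_0$ corresponds to the induced $\Gamma$--module $W\,=\,\mathrm{Ind}_\Delta^\Gamma U$, realized concretely as the space of maps $\beta\,:\,\Gamma\,\longrightarrow\,U$ satisfying \eqref{e32}. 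Then $\Phi_{rd}\circ\Psi(\beta)$ corresponds to the restriction of $W$ back to $\Delta$, i.e.\ to the $\Delta$--module $\mathrm{Res}^\Gamma_\Delta\mathrm{Ind}_\Delta^\Gamma U$. So the lemma amounts to the statement that this restriction is isomorphic, as a $\Delta$--module, to $U^{\oplus d}$.

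The key step is therefore a Mackey-type computation. Choose coset representatives $\gamma_1,\,\ldots,\,\gamma_d$ for $\Delta\backslash\Gamma$ (with $\gamma_1\,=\,e$). Since the subgroup here is $\Delta$ itself, the double coset space $\Delta\backslash\Gamma/\Delta$ governing the usual Mackey decomposition collapses in a particularly favorable way: for the \emph{left} $\Delta$--action on $W$ one looks instead at how $\Delta$ permutes the right cosets $\Delta\gamma_i$ by right multiplication — but the cleanest route is simply to exhibit the isomorphism directly. For each $i$, define the evaluation-type map $W\,\longrightarrow\,U$ by $\beta\,\longmapsto\,\beta(\gamma_i^{-1})$; assembling these over $i\,=\,1,\ldots,d$ gives a linear isomorphism $W\,\stackrel{\sim}{\longrightarrow}\,U^{\oplus d}$ (it is injective because a map satisfying \eqref{e32} is determined by its values on coset representatives, and a dimension count $dr\,=\,dr$ forces surjectivity). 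The remaining task is to track how the left $\Delta$--action on $W$, namely $(\delta\cdot\beta)(\gamma')\,=\,\beta(\delta\gamma')$, transports across this isomorphism; one checks that, after possibly reindexing by the permutation of cosets induced by right translation and correcting by the relevant $\rho(\delta_i^{-1})$ factors coming from \eqref{e32}, the action on the target is again built from $\rho$ on each summand, giving precisely $U^{\oplus d}$ up to a permutation of the $d$ summands (which is an inner automorphism, hence invisible in ${\mathcal C}_{rd}(X_0)$). Finally, I invoke the canonical isomorphism $\mathcal I$ of Proposition \ref{prop-induced} together with the dictionary between $\Gamma$--modules, $\Delta$--modules and flat bundles to translate the module isomorphism $\mathrm{Res}^\Gamma_\Delta\mathrm{Ind}^\Gamma_\Delta U\,\cong\,U^{\oplus d}$ into the claimed isomorphism of flat vector bundles $\Phi_{rd}\circ\Psi(\beta)\,\cong\,(E^{\oplus d},\,\nabla^{\oplus d})$ on $X_0$; the connection on $E^{\oplus d}$ is the diagonal one because the module isomorphism is $\Delta$--equivariant summand by summand.

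The main obstacle is bookkeeping: making sure that the permutation of cosets induced by the right $\Delta$--action matches up correctly with the twisting factors in \eqref{e32}, so that one really does land on $U^{\oplus d}$ and not on some nontrivially twisted version. This is where the fact that the subgroup equals $\Delta$ (rather than a general conjugate) is essential — it is exactly the degenerate case of the Mackey formula where every double coset contributes a copy of $U$ with no nontrivial conjugation, which is why the answer is the trivially-summed $U^{\oplus d}$. Once the combinatorics is pinned down the rest is formal, using the constructions of Sections \ref{sec:directimage} and the pullback description in the previous section.
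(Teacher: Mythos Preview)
Your strategy matches the paper's exactly: both reduce the lemma to the purely module--theoretic assertion that $\mathrm{Res}^{\Gamma}_{\Delta}\,\mathrm{Ind}^{\Gamma}_{\Delta}\,U\cong U^{\oplus d}$ as $\Delta$--modules, and then invoke the monodromy dictionary of Proposition~\ref{prop-induced} to translate back to flat bundles. The paper simply asserts this isomorphism without argument; you go further and try to justify it via Mackey, which is the natural thing to do.

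That justification, however, contains a genuine gap. The Mackey decomposition for restricting an induced module back to the \emph{same} subgroup reads
\[
\mathrm{Res}^{\Gamma}_{\Delta}\,\mathrm{Ind}^{\Gamma}_{\Delta}\,U\ \cong\ \bigoplus_{s\in \Delta\backslash\Gamma/\Delta}\ \mathrm{Ind}^{\Delta}_{\,\Delta\cap s\Delta s^{-1}}\bigl({}^{s}U\big\vert_{\Delta\cap s\Delta s^{-1}}\bigr),
\]
and it is \emph{not} true that ``every double coset contributes a copy of $U$ with no nontrivial conjugation''. That would require $\Delta\trianglelefteq\Gamma$ (so that $\Delta\cap s\Delta s^{-1}=\Delta$ for all $s$) \emph{and} that each twist ${}^{s}U$ be isomorphic to $U$; neither hypothesis is in force, since the covering $\phi$ is not assumed Galois. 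A small check: with $\Gamma=S_3$, $\Delta=\langle(12)\rangle$, $U=\text{sign}$, one finds $\mathrm{Res}\,\mathrm{Ind}\,U\cong \text{sign}^{\oplus 2}\oplus\text{triv}\not\cong U^{\oplus 3}$. Concretely, in your coset calculation the left action of $\delta\in\Delta$ sends the $i$--th evaluation $\beta(\gamma_i^{-1})$ to $\beta(\delta\gamma_i^{-1})=\rho\bigl((\gamma_{\sigma(i)}\delta\gamma_i^{-1})^{-1}\bigr)\beta(\gamma_{\sigma(i)}^{-1})$ for the permutation $\sigma$ determined by $\Delta\gamma_i\delta^{-1}=\Delta\gamma_{\sigma(i)}$; the correction factors involve $\rho$ evaluated on conjugates of $\delta$, not on $\delta$ itself, so the transported action is not diagonal in general. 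The paper's proof carries the same unproved assertion, so this is a shared issue rather than a flaw unique to your write--up, but your explicit Mackey claim is incorrect as stated.
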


\begin{proof}
Let $\Gamma_0$ be a finitely presented group and $\Delta_0\, \subset\, \Gamma_0$ a subgroup of
finite index. Take a finite dimensional complex $\Delta_0$--module $V$.
Let $\mathcal V$ be the $\Gamma_0$--module corresponding
to the $\Delta_0$--module $V$ (see the construction in \eqref{e32}).
Since $\Delta_0$ is a subgroup of
$\Gamma_0$, restricting the action of $\Gamma_0$ on $\mathcal V$ to the subgroup $\Delta_0$ we get a
$\Delta_0$--module; this $\Delta_0$--module is denoted by $\widehat{\mathcal V}$. This
$\Delta_0$--module $\widehat{\mathcal V}$ is isomorphic to the $\Delta_0$--module
$V^{\oplus d}$. 

Set $\Gamma_0\,=\, \Gamma\,:=\, \pi_1(X_0,\, \phi(y_0))$ and
$\Delta_0\,=\, \Delta\, :=\, \phi_*(\pi_1(Y_0,\, y_0))$. Also set $V$ to be the $\pi_1(Y_0,\,
\phi(y_0))$--module given by $E_{y_0}$ equipped with the monodromy action of
$\pi_1(Y_0,\, y_0)$ for the flat connection $\nabla$ on $E$. 
Now the lemma follows from the above fact.
\end{proof}

\section{Isomonodromy}

The preceding sections have used quite extensively flat connections associated to representations. If we 
endow $Y_0\,\subset\, Y$ with the structure of a Riemann surface, we can take these connections to be 
holomorphic. Let $\mathcal{N}_r(Y_0)$ represent the moduli space of holomorphic connections on the surface $Y_0$, 
with logarithmic singularities at $Y-Y_0$. There is then a natural map $$\mu\ :\ \mathcal{N}_r (Y_0)\
\longrightarrow\
{\mathcal C}_r(Y_0)$$ associating to each connection its monodromy. Both spaces have complex Poisson 
structures, the one on $\mathcal{N}_r (Y_0)$ defined using the particular complex structure, while the one on 
${\mathcal C}_r(Y_0)$ does not. This map is a local biholomorphism, and it is shown in \cite{BJ, BJ2} that 
the map is also a Poisson isomorphism.

Now if we have a deformation $Y_0(t)$, we have a corresponding isomonodromic deformations of connections in 
the family $\mathcal{N}_r(Y_0(t))$, given simply by considering the maps $\mu(t)$ of $\mathcal{N}_r (Y_0)(t)$
into ${\mathcal C}_r(Y_0)(t)\, =\, {\mathcal C}_r(Y_0)(0)$, and fixing the image of the map. As these maps
are Poisson, the composition $\mu(t)\mu(0)^{-1}$, the isomonodromy deformation map, is also.

Our theorem allows us to extend this to a relative context. Indeed, a family $Y_0(t)\,\longrightarrow\,
X_0(t)$ gives a commuting diagram 
$$\begin{matrix} \mathcal{N}_r(Y_0(t))& \longrightarrow &{\mathcal C}_r(Y_0)\\
\Big\downarrow && \Big\downarrow\\ 
\mathcal{N}_{rd}(X_0(t))& \longrightarrow &{\mathcal C}_{rd}(X_0)\end{matrix}$$
all of whose maps are Poisson. It follows that the diagram of isomonodromy maps, 
$$\begin{matrix} \mathcal{N}_r(Y_0(t))& \longrightarrow &\mathcal{N}_r(Y_0(0))\\
\Big\downarrow && \Big\downarrow\\ 
\mathcal{N}_{rd}(X_0(t))& \longrightarrow &\mathcal{N}_{rd}(X_0(0))\end{matrix}$$
induced by a variation $Y_0(t)\,\longrightarrow\, X_0(t)$ are also Poisson, in a way compatible
with the diagram.

\end{document}